\numberwithin{equation}{section}
\theoremstyle{plain}
\newtheorem{theorem}[equation]{Theorem}
\newtheorem{proposition}[equation]{Proposition}
\newtheorem{lemma}[equation]{Lemma} 
\newtheorem{corollary}[equation]{Corollary}
\theoremstyle{definition}
\theoremstyle{remark} 
\newtheorem{remark}[equation]{Remark} 
\newcommand{\End}{\operatorname{End}}
\newcommand{\Hom}{\operatorname{Hom}}
\newcommand{\Inj}{\operatorname{\mathsf{Inj}}}
\newcommand{\KInj}[1]{\mathsf K(\Inj #1)}
\newcommand{\Loc}{\operatorname{Loc}}
\newcommand{\Spec}{\operatorname{Spec}}
\newcommand{\StMod}{\operatorname{\mathsf{StMod}}}
\newcommand{\supp}{\operatorname{supp}}
\newcommand{\Supp}{\operatorname{Supp}}
\newcommand{\lotimes}{\otimes^{\mathbf L}}
\newcommand{\lto}{\longrightarrow}
\newcommand{\xra}{\xrightarrow}
\def\mcO{\mathcal{O}}
\def\sfT{\mathcal{T}}
\def\sfC{\mathsf C}
\def\sfD{\mathsf D}
\def\sfS{\mathsf S} 
\def\sfT{\mathsf T} 
\def\sfU{\mathsf U}
\def\sfV{\mathsf V}
\def\bbP{\mathbb P}
\newcommand{\fp}{\mathfrak{p}}
\newcommand{\gam}{\varGamma}
\def\one{\mathds 1}
\title[Localising subcategories for cochains on $BG$]{Localising
  subcategories for cochains on the classifying space of a finite
  group}
\author{Dave Benson} 
\address{Dave Benson \\ 
Institute of Mathematics\\ 
University of Aberdeen\\ 
King's College\\ 
Aberdeen AB24 3UE\\ 
Scotland U.K.}
\author{Srikanth B. Iyengar} 
\address{Srikanth B. Iyengar\\ 
Department of Mathematics\\ 
University of Nebraska\\ 
Lincoln, NE 68588\\ 
U.S.A.}
\author{Henning Krause} 
\address{Henning Krause\\ 
Fakult\"at f\"ur Mathematik\\ 
Universit\"at Bielefeld\\ 
33501 Bielefeld\\ 
Germany.}
\begin{document}

\begin{abstract}
  The localising subcategories of the derived category of the cochains
  on the classifying space of a finite group are classified. They are
  in one to one correspondence with the subsets of the set of
  homogeneous prime ideals of the cohomology ring $H^*(G,k)$.
\end{abstract}

%\keywords{localising subcategory, cosupport, local homology,
%localising subcategory, stable module category, triangulated
%category}

%\subjclass[2010]{20J06(primary); 13D45, 16E45, 18E30}

\thanks{The research of the second author was undertaken during a
  visit to the University of Bielefeld, supported by a research prize
  from the Humboldt Foundation, and by NSF grant DMS 0903493.}

\maketitle 

\section{Introduction}

Let $G$ be a finite group and $k$ a field of characteristic $p$.  Let $C^*(BG;k)$ be the cochains on the classifying space $BG$. Using the machinery of Elmendorf, K{\v{r}}{\'{\i}}{\v{z}}, Mandell and May \cite{Elmendorf/Kriz/Mandell/May:1996a}, one can regard $C^*(BG;k)$ as a strictly commutative $S$-algebra over the field $k$. The derived category $\sfD(C^*(BG;k))$ has thus a structure of a tensor triangulated category via the left derived tensor product $-\lotimes_{C^*(BG;k)} -$. The unit for the tensor product is $C^*(BG;k)$.

In this paper we apply techniques and results from \cite{Benson/Iyengar/Krause:2008a, Benson/Iyengar/Krause:bik2, Benson/Iyengar/Krause:bik3, Benson/Krause:2008a} to classify the localising subcategories of $\sfD(C^*(BG;k))$. More precisely, there is a notion of stratification for triangulated categories via the action of a graded commutative ring which implies that the localising subcategories are parameterised by sets of homogeneous prime ideals
\cite{Benson/Iyengar/Krause:bik2}.  For $\sfD(C^*(BG;k))$ we use the natural action of the endomorphism ring of the tensor identity which is isomorphic to the cohomology algebra $H^*(G,k)$ of the group $G$.

\begin{theorem}
\label{th:DCBG}
  The derived category $\sfD(C^*(BG;k))$ is stratified by the ring $H^*(G,k)$.
  This yields a one to one correspondence between the localising
  subcategories of $\sfD(C^*(BG;k))$ and subsets of the set of
  homogeneous prime ideals of $H^*(G,k)$.
\end{theorem}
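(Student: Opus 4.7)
The plan is to derive the stratification of $\sfD(C^*(BG;k))$ from the known stratification of $\KInj{kG}$ by $H^*(G,k)$ established in \cite{Benson/Iyengar/Krause:bik3}, by exhibiting a Keller-style Morita equivalence between $\sfD(C^*(BG;k))$ and a suitable localising subcategory of $\KInj{kG}$.

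Let $ik$ denote an injective resolution of the trivial $kG$-module. It is a compact object of $\KInj{kG}$, and its DG endomorphism algebra $\End^*_{kG}(ik)$ is a model for the $E_\infty$-algebra $C^*(BG;k)$ after rectification. By Keller's Morita theorem for compactly generated triangulated categories (in an $E_\infty$-refinement), the functor $\RHom_{kG}(ik,-)$ induces an equivalence of tensor triangulated categories
$$\Loc_{\KInj{kG}}(ik) \xrightarrow{\sim} \sfD(C^*(BG;k))$$
carrying $ik$ to the tensor unit $C^*(BG;k)$. The $H^*(G,k)$-action on each side arises from the graded endomorphism ring of the unit, so this equivalence automatically intertwines the two actions.

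To finish, I would invoke the stratification of $\KInj{kG}$ by $H^*(G,k)$ from \cite{Benson/Iyengar/Krause:bik3}. A standard principle in the BIK framework asserts that a localising subcategory of a stratified triangulated category is itself stratified, indexed by the primes in its support. The support of $\Loc_{\KInj{kG}}(ik)$ coincides with $\supp_{H^*(G,k)}(ik)$, and since $H^*(G,k)$ acts on $ik$ faithfully through the identification $\End^*_{\KInj{kG}}(ik) = H^*(G,k)$, this support is all of $\Spec H^*(G,k)$. Transporting along the equivalence above yields the desired stratification of $\sfD(C^*(BG;k))$ by $H^*(G,k)$, and hence the one-to-one correspondence with subsets of homogeneous prime ideals.

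The principal obstacle lies in rectifying the DG endomorphism algebra $\End^*_{kG}(ik)$ to the EKMM strictly commutative $S$-algebra $C^*(BG;k)$ used in the statement, in such a way that the tensor structure on $\KInj{kG}$ and the $H^*(G,k)$-action are both preserved along the equivalence. This is a technical but essentially standard comparison between different models of $E_\infty$-algebras, handled by rectification theorems of Mandell, Schwede--Shipley, and others; the remainder of the argument is then a fairly direct application of the BIK machinery.
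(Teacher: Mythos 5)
Your setup matches the paper's: the equivalence $\sfD(C^*(BG;k))\simeq\Loc_{\KInj{kG}}(ik)$ of tensor triangulated categories is exactly what the paper imports from Benson--Krause \cite{Benson/Krause:2008a} (so the rectification issue you flag as the ``principal obstacle'' is already dealt with there and is not where the difficulty lies), and the stratification of $\KInj{kG}$ by $H^*(G,k)$ is imported from \cite{Benson/Iyengar/Krause:bik3}. The genuine gap is the step you dismiss as ``a standard principle in the BIK framework'': that a localising subcategory of a stratified category is itself stratified. What \cite{Benson/Iyengar/Krause:bik3} proves is stratification of $\KInj{kG}$ \emph{as a tensor triangulated category}, which classifies only the \emph{tensor ideal} localising subcategories. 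The subcategory $\Loc_{\KInj{kG}}(ik)$ is generated by the tensor unit and is in general \emph{not} a tensor ideal --- if it were, it would contain $\Loc^{\otimes}(ik)=\KInj{kG}$, which fails exactly when the nucleus is nonempty --- so the classification cannot simply be restricted to it, and no general principle converts a classification of tensor ideals of the ambient category into a classification of \emph{all} localising subcategories of $\Loc(\one)$. Remark~\ref{rk:P1} shows how badly such a transfer can fail in the absence of stratification: for the derived category of quasi-coherent sheaves on $\bbP^1_k$, the subcategory $\Loc_\sfT(\mcO)$ has no proper localising subcategories at all, while $\sfT$ has many tensor ideal ones.

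The missing bridge is precisely Theorem~\ref{th:T}, which is the real content of the paper: for a compactly generated tensor triangulated category $\sfT$ stratified by a noetherian ring $R$ of finite Krull dimension, the assignments $\sfS\mapsto\Loc^\otimes_\sfT(\sfS)$ and $\sfU\mapsto\sfU\cap\Loc_\sfT(\one)$ are mutually inverse bijections between localising subcategories of $\Loc_\sfT(\one)$ and tensor ideal localising subcategories of $\sfT$. Establishing this takes work you have not supplied: injectivity rests on Proposition~\ref{pr:FLocS}, which uses compactness of $\one$ and a projection-formula argument (Lemma~\ref{le:loc}) to show $\Loc^\otimes_\sfT(\sfS)\cap\Loc_\sfT(\one)=\sfS$; surjectivity uses stratification to write every tensor ideal localising subcategory as generated by objects $\gam_\fp\one$, together with the finite Krull dimension of $H^*(G,k)$ to guarantee $\gam_\fp\one\in\Loc_\sfT(\one)$. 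Your observation that the support of $ik$ is all of $\Spec H^*(G,k)$ is correct but beside the point; to close the gap you must prove (or correctly cite) the bijection of Theorem~\ref{th:T}.
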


% \texttt{It may be worth introducing the notion of support
% w.r.t. $H^{*}(BG,k)$ and using that
% to% state the corllary below as the first. Then the discussion of
% tensor products on $C^{*}(BG,k)$ 
% can be postponed a bit.}
% Every localising subcategory of $\sfD(C^*(BG;k))$ is tensor ideal,
% and

%\begin{corollary}
%\label{co:DCBG}
%There is a one to one correspondence between the localising
%subcategories of $\sfD(C^*(BG;k))$ and subsets of the set of
%homogeneous prime ideals of $H^*(G,k)$.
%\end{corollary}

It is proved in \cite{Benson/Krause:2008a} that there is an equivalence of tensor triangulated categories between
$\sfD(C^*(BG;k))$ and the localising subcategory of $\KInj{kG}$ generated by the tensor identity. Here, $\KInj{kG}$ is the homotopy category of complexes of injective ($=$ projective) $kG$-modules, studied in \cite{Benson/Krause:2008a,Krause:2005a}.

The main theorem of \cite{Benson/Iyengar/Krause:bik3} states that
$\KInj{kG}$ is stratified as a tensor triangulated category by
$H^*(G,k)$. Theorem \ref{th:DCBG} is a consequence of a more general
result concerning tensor triangulated categories, which is described
below.

Let $(\sfT,\otimes,\one)$ be a compactly generated tensor triangulated category, as described in \cite[\S8]{Benson/Iyengar/Krause:2008a}, and $R$ a graded commutative noetherian ring acting on $\sfT$ via a homomorphism $R\to \End^{*}_{\sfT}(\one)$. In this case, for each homogeneous prime ideal $\fp$ of $R$ there exists a \emph{local cohomology functor} $\gam_\fp\colon\sfT\to\sfT$; see \cite{Benson/Iyengar/Krause:2008a}. The \emph{support} of an object $X$ in $\sfT$ is then defined to be 
\[
\supp_RX=\{\fp\in\Spec R\mid\gam_\fp X\neq 0\}\,.
\] 
The condition that $\sfT$ is \emph{stratified} by the action of $R$ means that assigning a subcategory $\sfS$ of $\sfT$ to its
support
\[
\supp_R\sfS=\bigcup_{X\in \sfS}\supp_RX
\] 
yields a bijection between \emph{tensor ideal} localising subcategories of $\sfT$ and subsets of the homogeneous prime ideal spectrum $\Spec R$ contained in $\supp_R\sfT$; see \cite[Theorem~4.2]{Benson/Iyengar/Krause:bik2}. Theorem~\ref{th:DCBG} is thus a special case of the result below that relates tensor ideal localising subcategories of $\sfT$ and the localising subcategories of $\Loc_\sfT(\one)$, the localising subcategory of $\sfT$ generated by
the tensor unit. We note that $\Loc_{\sfT}(\one)$ is a compactly generated tensor triangulated category in its own right and that $R$ acts on it as well.

\begin{theorem}
\label{th:T}
Suppose that the Krull dimension of $R$ is finite. If $\sfT$ is
stratified by $R$ as a tensor triangulated category, then so is
$\Loc_\sfT(\one)$, and there is a bijection
\[
\left\{
\begin{gathered}
\text{Tensor ideal localising}\\ \text{subcategories of $\sfT$}\end{gathered}\;
\right\}
\stackrel{\sim}\lto
\left\{
\begin{gathered}
\text{Localising subcategories}\\ \text{of $\Loc_{\sfT}(\one)$}
\end{gathered}\; \right\}.
\] 
It assigns each tensor ideal localising subcategory $\sfS$ of $\sfT$
to $\sfS\cap\Loc_{\sfT}(\one)$.
\end{theorem}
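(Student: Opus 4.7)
The plan is to deduce the theorem by first establishing that $\sfU := \Loc_\sfT(\one)$ is itself stratified by $R$, and then checking that both sides of the claimed bijection are parametrised by subsets of $\supp_R\sfT$. A first, crucial observation is that $\sfU$ is a compactly generated tensor triangulated category whose tensor unit $\one$ is a compact generator. From this it follows, via the standard argument that for $X$ in a localising subcategory $\mcS' \subseteq \sfU$ the class $\{Y \in \sfU : X \otimes Y \in \mcS'\}$ is a localising subcategory of $\sfU$ containing $\one$, that \emph{every} localising subcategory of $\sfU$ is automatically a tensor ideal of $\sfU$. Moreover, $\gam_\fp\one$ lies in $\sfU$: it is built from $\one$ by forming Koszul objects for a finite generating sequence of $\fp$ and applying Bousfield localisation. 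Hence $\gam_\fp = \gam_\fp\one\otimes(-)$ restricts to $\sfU$, and applying $\gam_\fp$ to a construction of an arbitrary $X \in \sfU$ out of $\one$ yields $\gam_\fp\sfU = \Loc_\sfU(\gam_\fp\one)$.

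The local-global principle in $\sfU$ is inherited from the one in $\sfT$ (which holds because $\dim R < \infty$): for $X \in \sfU$, the relation $X \in \Loc_\sfT(\gam_\fp X : \fp \in \Spec R)$ in $\sfT$ already takes place inside $\sfU$, since each $\gam_\fp X$ does. The main obstacle is \emph{minimality}: for each nonzero $X \in \gam_\fp\sfU$, one must show $\Loc_\sfU(X) = \gam_\fp\sfU$, equivalently that $\gam_\fp\one \in \Loc_\sfU(X)$. Stratification of $\sfT$ supplies only the tensor-ideal statement that $\gam_\fp\one$ lies in the localising subcategory of $\sfT$ generated by the objects $X \otimes Z$ for $Z \in \sfT$; the task is to cut this generation down so that it happens inside $\sfU$. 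My approach is to exploit the closure of $\Loc_\sfU(X)$ under tensoring with $\sfU$ (a consequence of the opening observation), together with $X \cong \gam_\fp\one\otimes X$, to rewrite each generator $X \otimes C$ (for $C$ a compact generator of $\sfT$) as $X \otimes \gam_\fp C$, and then use the action of $\End^*_\sfT(\one)$ together with compactness of $\one$ in $\sfU$ to replace $\gam_\fp C$ by objects of $\sfU$ whose contribution suffices to recover $\gam_\fp\one$. This is the technical heart of the argument.

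Once stratification of $\sfU$ is in hand, the bijection is formal. The map $\Phi(\sfS) := \sfS \cap \sfU$ preserves supports: $\supp_R \Phi(\sfS) \subseteq \supp_R\sfS$ is trivial, and for each $\fp \in \supp_R\sfS$ tensor-ideal minimality in $\sfT$ forces $\gam_\fp\one \in \sfS$, so $\gam_\fp\one \in \sfS\cap\sfU$ witnesses $\fp \in \supp_R\Phi(\sfS)$. The putative inverse $\Psi(\mcS')$, defined as the tensor-ideal localising subcategory of $\sfT$ generated by $\mcS'$, also preserves supports, since tensor products cannot enlarge the support. As tensor-ideal localising subcategories of $\sfT$ are parametrised by subsets of $\supp_R\sfT$ by hypothesis, and localising subcategories of $\sfU$ by the same set by the stratification just established, it follows that $\Phi$ and $\Psi$ are mutually inverse bijections, as claimed.
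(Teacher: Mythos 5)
Your high-level architecture is reasonable (stratify $\Loc_\sfT(\one)$ first, then read off the bijection from the support parametrisations), and the formal endgame is essentially fine, but the step you yourself flag as ``the technical heart'' --- minimality, i.e.\ that $\gam_\fp\one\in\Loc_{\sfU}(X)$ for every nonzero $X\in\gam_\fp\sfU$ --- is left as a sketch, and the sketch as written does not work. You propose to take the generators $X\otimes C$ of $\Loc^\otimes_\sfT(X)$, rewrite them as $X\otimes\gam_\fp C$, and then ``replace $\gam_\fp C$ by objects of $\sfU$'' using the $R$-action and compactness of $\one$. There is no mechanism offered for this replacement: $\gam_\fp C$ has no reason to lie in $\Loc_\sfT(\one)$ or to be approximable by objects of it (in the $\bbP^1$ example of the paper, $\gam_\fp\mcO(1)\notin\Loc(\mcO)$), so the argument must use stratification of $\sfT$ in a structural way that your outline does not identify.

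The missing ingredient is a projection formula for the colocalisation $\gam\colon\sfT\to\Loc_\sfT(\one)$ right adjoint to the inclusion: since $\one$ is compact, $\gam$ preserves coproducts, and for $Y\in\Loc_\sfT(\one)$ one has $\gam(X\otimes Y)\cong\gam X\otimes Y$ (both sides of the comparison map behave well under the localising subcategory of $Y$'s containing $\one$). From this one deduces that for any localising subcategory $\sfS\subseteq\Loc_\sfT(\one)$,
\[
\Loc^\otimes_\sfT(\sfS)\cap\Loc_\sfT(\one)\;=\;\gam\bigl(\Loc^\otimes_\sfT(\sfS)\bigr)\;=\;\sfS ,
\]
i.e.\ $\sfS\mapsto\Loc^\otimes_\sfT(\sfS)$ is injective with retraction $\sfV\mapsto\sfV\cap\Loc_\sfT(\one)$. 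This is exactly what ``cuts the tensor-ideal generation down to $\sfU$'': applied to $\sfS=\Loc_\sfU(X)$ it gives $\gam_\fp\one\in\Loc^\otimes_\sfT(X)\cap\Loc_\sfT(\one)=\Loc_\sfU(X)$, which is your minimality. The paper in fact never argues minimality prime-by-prime: it combines this retraction with the fact (from stratification of $\sfT$ and $\dim R<\infty$) that every tensor ideal localising subcategory of $\sfT$ equals $\Loc^\otimes_\sfT(\{\gam_\fp\one\mid\fp\in\supp_R\sfU\})$ with $\gam_\fp\one\in\Loc_\sfT(\one)$, obtaining surjectivity and the classification of localising subcategories of $\Loc_\sfT(\one)$ in one stroke. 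Without the projection formula, or some equivalent statement, your proof is incomplete at its central point.
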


\begin{remark}\label{rk:P1}
  The theorem is not true without the assumption that $\sfT$ is
  stratified by $R$. For example, let $\sfT$ be the derived category
  of quasi-coherent sheaves on the projective line $\bbP^1_k$. The
  tensor unit is $\mcO$.  In this example there are no proper
  localising subcategories of $\Loc_\sfT(\mcO)$ since
  $\End_\sfT^*(\mcO)=k$, while there are many tensor ideal localising
  subcategories of $\sfT$.
\end{remark}

\begin{remark}
  The assumption that the Krull dimension of $R$ is finite is
  artificial, and is used only to ensure that for each $X\in\sfT$ and
  $\fp\in\Spec R$ the object $\gam_{\fp}X$ belongs to
  $\Loc_\sfT(X)$. One can replace this condition by, for instance, the
  assumption that $\sfT$ arises as the homotopy category of a Quillen
  model category \cite[\S6]{Stevenson:2011a}.
\end{remark}

\section{Localising subcategories of $\Loc_{\sfT}(\one)$}

In this section $\sfT$ is a triangulated category with set-indexed coproducts and the tensor product $\otimes$ provides a symmetric monoidal structure with unit $\one$ on $\sfT$, which is exact in each variable and preserves set-indexed coproducts.

The proof of Theorem~\ref{th:T} is based on a sequence of elementary lemmas. The first one describes the tensor ideal localising subcategory of $\sfT$ which is generated by a class $\sfC$ of objects; we denote this by $\Loc_\sfT^\otimes(\sfC)$.

\begin{lemma}
\label{le:gen}
Let $\sfC$ be a class of objects of $\sfT$. Then
\[
\Loc_\sfT^\otimes(\sfC)=\Loc_\sfT(\{X\otimes Y\mid X\in\sfC,
Y\in\sfT\}).
\]
\end{lemma}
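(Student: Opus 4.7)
The plan is to prove the two inclusions separately. The containment $\Loc_\sfT(\{X\otimes Y\mid X\in\sfC, Y\in\sfT\}) \subseteq \Loc_\sfT^\otimes(\sfC)$ is the easy direction. Since $\sfC\subseteq\Loc_\sfT^\otimes(\sfC)$ and the latter is a tensor ideal, every product $X\otimes Y$ with $X\in\sfC$ and $Y\in\sfT$ already lies in $\Loc_\sfT^\otimes(\sfC)$. Taking the localising subcategory generated by these objects then stays inside $\Loc_\sfT^\otimes(\sfC)$.

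For the reverse inclusion, write $\sfL:=\Loc_\sfT(\{X\otimes Y\mid X\in\sfC, Y\in\sfT\})$. I would argue that $\sfL$ is itself a tensor ideal localising subcategory containing $\sfC$; minimality of $\Loc_\sfT^\otimes(\sfC)$ will then finish the job. That $\sfC\subseteq\sfL$ follows by choosing $Y=\one$ and using the isomorphism $X\otimes\one\cong X$. The heart of the argument is showing that $\sfL$ is tensor ideal, and this is the step on which the lemma really rests.

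The standard device is to fix $W\in\sfT$ and consider the full subcategory
\[
\sfL_W=\{Z\in\sfT\mid Z\otimes W\in\sfL\}.
\]
Because $-\otimes W$ is exact and preserves set-indexed coproducts (by the running hypothesis on $\sfT$), and because $\sfL$ is triangulated and closed under coproducts, $\sfL_W$ is a localising subcategory of $\sfT$. For any generator $X\otimes Y$ of $\sfL$, one has $(X\otimes Y)\otimes W\cong X\otimes(Y\otimes W)$ by the symmetric monoidal structure, and the right-hand side is again a product of an object of $\sfC$ with an object of $\sfT$, hence lies in $\sfL$. Thus $\sfL_W$ contains the generating set, and therefore equals all of $\sfL$ on that part, showing $Z\otimes W\in\sfL$ for every $Z\in\sfL$. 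Since $W$ was arbitrary, $\sfL$ is tensor ideal.

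The only genuinely delicate point is the tensor-ideal closure of $\sfL$, and even this is routine once one introduces the auxiliary subcategory $\sfL_W$; everything else is bookkeeping with the definitions. No finiteness or noetherian hypothesis is required, which matches the generality in which the lemma is stated.
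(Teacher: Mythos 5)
Your proof is correct and takes essentially the same route as the paper: the key step, that $\sfL$ is a tensor ideal, is handled there by a separate auxiliary lemma (applied to $F=-\otimes W$) whose proof is exactly your preimage argument, since your $\sfL_W$ is just $F^{-1}\sfL$. The only difference is that you inline that lemma rather than stating it separately.
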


\begin{proof}
  Set $\sfS=\Loc_\sfT(\{X\otimes Y\mid X\in\sfC, Y\in\sfT\})$.  It
  suffices to show that $\sfS$ is tensor ideal. This means that
  $F\sfS\subseteq \sfS$ for each tensor functor $F=-\otimes Y$, which
  is an immediate consequence of Lemma~\ref{le:gen2} below.
\end{proof}

\begin{lemma}
\label{le:gen2}
Let $F\colon \sfU\to\sfV$ be an exact functor between triangulated
categories that preserves set-indexed coproducts.  If $\sfC$ is a
class of objects of $\sfU$, then
\[
F\Loc_\sfU(\sfC)\subseteq\Loc_\sfV(F\sfC).
\]
\end{lemma}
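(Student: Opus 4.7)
The plan is to consider the preimage of $\Loc_{\sfV}(F\sfC)$ under $F$ and show that it is itself a localising subcategory of $\sfU$ containing $\sfC$; minimality of $\Loc_\sfU(\sfC)$ then yields the inclusion. Concretely, I would set
\[
\sfW=\{X\in\sfU\mid FX\in\Loc_\sfV(F\sfC)\}
\]
and verify the required closure properties.

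First, $\sfC\subseteq\sfW$ by construction. Next I would check that $\sfW$ is a triangulated subcategory: it is closed under the shift $\Sigma$ because $F$ is exact and $\Loc_\sfV(F\sfC)$ is closed under $\Sigma$; and given a triangle $X\to Y\to Z\to\Sigma X$ in $\sfU$ with $X,Y\in\sfW$, applying $F$ yields a triangle in $\sfV$ with two vertices in $\Loc_\sfV(F\sfC)$, so the third, namely $FZ$, lies there as well, giving $Z\in\sfW$. To see that $\sfW$ is closed under set-indexed coproducts, let $(X_i)_{i\in I}$ be a family in $\sfW$; since $F$ preserves set-indexed coproducts, $F(\coprod_i X_i)\cong\coprod_i FX_i$, and this lies in $\Loc_\sfV(F\sfC)$ because $\Loc_\sfV(F\sfC)$ is closed under coproducts. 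Finally, a localising subcategory is automatically closed under direct summands (by the usual Eilenberg swindle argument using countable coproducts), so $\sfW$ is closed under summands of objects in $\Loc_\sfV(F\sfC)$ and hence is a localising subcategory of $\sfU$.

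Since $\sfW$ is a localising subcategory of $\sfU$ containing $\sfC$, it must contain the smallest such, so $\Loc_\sfU(\sfC)\subseteq\sfW$. Unwinding the definition of $\sfW$ gives $F\Loc_\sfU(\sfC)\subseteq\Loc_\sfV(F\sfC)$, as required. There is no real obstacle here: the argument is a standard "preimage of a localising subcategory" trick, and the only property of $F$ being used is that it is exact and preserves set-indexed coproducts, which is exactly the hypothesis.
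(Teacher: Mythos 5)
Your proposal is correct and is essentially identical to the paper's argument: the paper also observes that the preimage $F^{-1}\Loc_\sfV(F\sfC)$ is a localising subcategory of $\sfU$ containing $\sfC$, hence contains $\Loc_\sfU(\sfC)$, and then applies $F$. You merely spell out the routine verification that the preimage is localising, which the paper leaves implicit.
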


\begin{proof}
The preimage $F^{-1}\Loc_\sfV(F\sfC)$ is a localising subcategory of $\sfU$ containing $\sfC$.  Thus it contains $\Loc_\sfU(\sfC)$, and one gets 
\[
F\Loc_\sfU(\sfC)\subseteq F F^{-1}\Loc_\sfV(F\sfC)\subseteq\Loc_\sfV(F\sfC).\qedhere
\]
\end{proof}

\begin{lemma}\label{le:loc}
  Let $\gam\colon\sfT\to\sfT$ be a colocalization functor that
  preserves set-indexed coproducts. Then for any $X\in\sfT$ and $Y\in
  \Loc_\sfT(\one)$, there is a natural isomorphism
  \[\gam X\otimes Y\xra{\sim} \gam(X \otimes Y). 
\]
\end{lemma}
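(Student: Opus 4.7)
\emph{Proof plan.} The plan is to exhibit the triangle $\gam X\otimes Y\to X\otimes Y\to LX\otimes Y$ as the colocalization triangle of $X\otimes Y$ whenever $Y\in\Loc_{\sfT}(\one)$, where $L$ denotes the localization functor associated to $\gam$; the uniqueness of such a triangle then yields the desired isomorphism. Recall that $\gam$ sits in natural exact triangles $\gam X\to X\to LX$ for every $X\in\sfT$; since $\gam$ preserves set-indexed coproducts by hypothesis and the identity functor does, so does $L$. An object $Z\in\sfT$ is $\gam$-colocal precisely when $LZ=0$, and is $L$-local precisely when $\gam Z=0$; hence the classes of $\gam$-colocal and of $L$-local objects are each localising subcategories of $\sfT$.

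Now fix $X\in\sfT$ and consider
\[
\mcS_1=\{Y\in\sfT\mid \gam X\otimes Y\text{ is $\gam$-colocal}\},\qquad
\mcS_2=\{Y\in\sfT\mid LX\otimes Y\text{ is $L$-local}\}.
\]
Each is the preimage of a localising subcategory under an exact, coproduct-preserving functor (namely $-\otimes\gam X$ and $-\otimes LX$ respectively), so each is itself localising. Both contain $\one$, because $\gam X\otimes\one\cong\gam X$ is colocal and $LX\otimes\one\cong LX$ is local. Consequently $\Loc_{\sfT}(\one)\subseteq\mcS_1\cap\mcS_2$.

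For $Y$ in this intersection, tensoring the triangle $\gam X\to X\to LX$ with $Y$ produces an exact triangle whose left term is $\gam$-colocal and whose right term is $L$-local. By uniqueness, up to unique isomorphism over $X\otimes Y$, of the colocalization triangle of $X\otimes Y$, the leftmost map must agree with the coreflection $\gam(X\otimes Y)\to X\otimes Y$, producing the natural isomorphism $\gam X\otimes Y\xra{\sim}\gam(X\otimes Y)$; naturality in both variables is immediate from the universal property. I expect the main obstacle to be the verification that the $\gam$-colocal and $L$-local objects form localising subcategories: closure under triangles and shifts follows from exactness of $\gam$ and $L$, but closure under set-indexed coproducts is precisely where the hypothesis that $\gam$ (and therefore $L$) preserves coproducts is essential. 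Everything else is then formal.
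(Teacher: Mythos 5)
Your proof is correct and uses essentially the paper's argument: the key step in both is that the class of $Y$ with the desired property is a localising subcategory of $\sfT$ containing $\one$, hence contains $\Loc_\sfT(\one)$, with the coproduct-preservation hypothesis entering in exactly the same place. The only variation is in certifying the isomorphism at the end: the paper checks directly that the class of $Y'$ for which the comparison map $\phi_{Y'}\colon\gam X\otimes Y'\to\gam(X\otimes Y')$ is an isomorphism is localising and contains $\one$, whereas you verify the complementary condition that $LX\otimes Y$ is $L$-local and invoke uniqueness of the colocalisation triangle --- an equally valid formal step.
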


\begin{remark}
  There is an analogous result for a localization functor
  $L\colon\sfT\to\sfT$ that preserves set-indexed coproducts: For any
  $X\in\sfT$ and $Y\in \Loc_\sfT(\one)$, there is a natural
  isomorphism $L(X \otimes Y) \xra{\sim} LX \otimes Y$.
\end{remark}

\begin{proof}
  A colocalisation functor $\gam$ comes with a natural morphism $\gam
  X\to X$.  Tensoring this with an object $Y\in \Loc_\sfT(\one)$ gives
  a morphism $\gam X\otimes Y\to X\otimes Y$ that factors through the
  natural morphism $\gam (X\otimes Y)\to X \otimes Y$. Here, one uses
  that $\gam X\otimes Y$ belongs to $\gam\sfT$, since the
  objects $Y'\in\sfT$ with $\gam X\otimes Y'\in\gam\sfT$ form a
  localising subcategory containing $\one$. The induced morphism
  $\phi_Y\colon\gam X\otimes Y\to \gam(X \otimes Y)$ is an
  isomorphism.  To see this, observe that the objects $Y'\in\sfT$ such
  that $\phi_{Y'}$ is an isomorphism form a localising subcategory
  containing $\one$.
\end{proof}

\begin{proposition}
\label{pr:FLocS}
Suppose that the unit $\one$ is compact in $\sfT$ and let
$\gam\colon\sfT\to\Loc_\sfT(\one)$ denote the right adjoint of the
inclusion $\Loc_\sfT(\one)\to\sfT$. If $\sfS$ is a localising
subcategory of $\Loc_\sfT(\one)$, then
\[
\Loc^\otimes_\sfT(\sfS)\cap\Loc_\sfT(\one)=\gam(\Loc^\otimes_\sfT(\sfS))=\sfS. 
\]
\end{proposition}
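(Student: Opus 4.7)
The plan is to prove three inclusions. First I would record the easy observations: since $\iota\colon\Loc_\sfT(\one)\hookrightarrow\sfT$ is fully faithful with right adjoint $\gam$, the counit $\gam Y\to Y$ is an isomorphism for every $Y\in\Loc_\sfT(\one)$. Thus for $X\in\sfS$ we have $X=\gam X\in\gam(\Loc^\otimes_\sfT(\sfS))$, giving $\sfS\subseteq\gam(\Loc^\otimes_\sfT(\sfS))$. Likewise, any object in $\Loc^\otimes_\sfT(\sfS)\cap\Loc_\sfT(\one)$ is its own image under $\gam$, which yields one half of the first equality. The image of $\gam$ lies in $\Loc_\sfT(\one)$ by construction, so once $\gam(\Loc^\otimes_\sfT(\sfS))\subseteq\sfS$ is established the chain of inclusions $\gam(\Loc^\otimes_\sfT(\sfS))\subseteq\sfS\subseteq\Loc^\otimes_\sfT(\sfS)\cap\Loc_\sfT(\one)$ closes everything up.

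The key preliminary observation that I would isolate is that $\sfS$ is automatically a tensor ideal inside $\Loc_\sfT(\one)$: for any $X\in\sfS$ the class $\{Y\in\Loc_\sfT(\one)\mid X\otimes Y\in\sfS\}$ is a localising subcategory of $\Loc_\sfT(\one)$ (since $X\otimes-$ is exact and preserves coproducts), and it contains $\one$ because $X\otimes\one\cong X$; as $\one$ generates $\Loc_\sfT(\one)$, this class equals all of $\Loc_\sfT(\one)$. This is the ingredient that will let me transport a tensor ideal closure in $\sfT$ back into $\sfS$.

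It remains to prove $\gam(\Loc^\otimes_\sfT(\sfS))\subseteq\sfS$. By Lemma~\ref{le:gen}, $\Loc^\otimes_\sfT(\sfS)=\Loc_\sfT(\{X\otimes Y\mid X\in\sfS,\,Y\in\sfT\})$. The functor $\gam$ preserves coproducts (this uses that $\one$ is compact in $\sfT$, so the coproduct-preserving left adjoint $\iota$ sends a compact generator to a compact object, forcing $\gam$ to preserve coproducts); applying Lemma~\ref{le:gen2} to $\gam\colon\sfT\to\Loc_\sfT(\one)$ yields
\[
\gam(\Loc^\otimes_\sfT(\sfS))\subseteq\Loc_{\Loc_\sfT(\one)}(\{\gam(X\otimes Y)\mid X\in\sfS,\,Y\in\sfT\}).
\]
By symmetry of $\otimes$ and Lemma~\ref{le:loc} (applied with the roles of the two arguments swapped, using $X\in\sfS\subseteq\Loc_\sfT(\one)$), each generator satisfies $\gam(X\otimes Y)\cong X\otimes\gam Y$. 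Since $\gam Y\in\Loc_\sfT(\one)$, the tensor-ideal property of $\sfS$ established in the previous paragraph gives $X\otimes\gam Y\in\sfS$. Hence the generators all lie in $\sfS$, and $\Loc_{\Loc_\sfT(\one)}(\sfS)=\sfS$ finishes the inclusion.

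I expect the main subtle point to be the automatic tensor-ideal property of $\sfS$ inside $\Loc_\sfT(\one)$; without it one could not convert Lemma~\ref{le:loc}'s identification $\gam(X\otimes Y)\cong X\otimes\gam Y$ into membership in $\sfS$. The other minor technical point is verifying that $\gam$ preserves coproducts so that Lemma~\ref{le:gen2} applies, but this is immediate from compactness of $\one$ in $\sfT$.
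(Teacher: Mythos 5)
Your proof is correct and follows essentially the same route as the paper: the same cyclic chain of inclusions, with Lemma~\ref{le:gen} describing the generators of $\Loc^\otimes_\sfT(\sfS)$, Lemma~\ref{le:gen2} pushing these through the coproduct-preserving $\gam$, and Lemma~\ref{le:loc} (with the arguments swapped by symmetry of $\otimes$) identifying $\gam(X\otimes Y)$ with $X\otimes\gam Y$. The details you supply --- that $\sfS$ is automatically a tensor ideal in $\Loc_\sfT(\one)$ because $\one$ generates, and that compactness of $\one$ forces $\gam$ to preserve coproducts --- are exactly what the paper's terse final sentence leaves implicit.
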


\begin{proof}
We verify each of the following inclusions
\[\sfS\subseteq\Loc^\otimes_\sfT(\sfS)\cap\Loc_\sfT(\one)\subseteq
\gam(\Loc^\otimes_\sfT(\sfS))\subseteq\sfS. \] The first one is clear.
Composing the functor $\gam$ with the inclusion
$\Loc_\sfT(\one)\to\sfT$ yields a colocalisation functor that
preserves set-indexed coproducts, since $\one$ is compact.  For an
object $X$ in $\Loc^\otimes_\sfT(\sfS)\cap\Loc_\sfT(\one)$, we have
$\gam X\cong X$. This gives the second inclusion.  Applying
Lemma~\ref{le:loc} together with the description of
$\Loc^\otimes_\sfT(\sfS)$ from Lemma~\ref{le:gen} yields the third
inclusion. 
\end{proof}

\begin{corollary}
  Suppose that the unit $\one$ is a compact object in $\sfT$. Assigning each
  localising subcategory $\sfS$ of $\Loc_\sfT(\one)$ to
  $\Loc^\otimes_{\sfT}(\sfS)$ gives a bijection
\[
\left\{
\begin{gathered}
\text{Localising subcategories}\\ \text{of $\Loc_{\sfT}(\one)$}
\end{gathered}\; \right\} 
\stackrel{\sim}\lto
\left\{
\begin{gathered}
\text{Tensor ideal localising subcategories of}\\ \text{$\sfT$
  generated by objects from $\Loc_\sfT(\one)$}\end{gathered}\;
\right\}.
\] 
\end{corollary}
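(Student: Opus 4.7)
The plan is to deduce the corollary formally from Proposition~\ref{pr:FLocS}; no further appeal to local cohomology or the ring action is required. The expected inverse sends a tensor ideal localising subcategory $\sfU$ of $\sfT$, generated by a class of objects from $\Loc_\sfT(\one)$, to the intersection $\sfU\cap\Loc_\sfT(\one)$, which is clearly a localising subcategory of $\Loc_\sfT(\one)$.

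First I would check that the map $\sfS\mapsto\Loc^\otimes_\sfT(\sfS)$ really lands in the target set, which is immediate from the definitions: $\Loc^\otimes_\sfT(\sfS)$ is tensor ideal localising by construction, and is generated by $\sfS\subseteq\Loc_\sfT(\one)$. That intersecting back with $\Loc_\sfT(\one)$ recovers $\sfS$ is precisely the identity $\Loc^\otimes_\sfT(\sfS)\cap\Loc_\sfT(\one)=\sfS$ supplied by Proposition~\ref{pr:FLocS}; this handles one of the two compositions.

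For the reverse composition, suppose $\sfU=\Loc^\otimes_\sfT(\sfC)$ for some $\sfC\subseteq\Loc_\sfT(\one)$, and put $\sfS=\sfU\cap\Loc_\sfT(\one)$. Then $\sfC\subseteq\sfS$ gives $\sfU\subseteq\Loc^\otimes_\sfT(\sfS)$, while $\sfS\subseteq\sfU$ together with $\sfU$ being tensor ideal and localising yields $\Loc^\otimes_\sfT(\sfS)\subseteq\sfU$; hence $\Loc^\otimes_\sfT(\sfS)=\sfU$. I do not anticipate any real obstacle, since Proposition~\ref{pr:FLocS} does the substantive work. The only point requiring mild care is that the hypothesis ``generated by objects from $\Loc_\sfT(\one)$'' is genuinely needed to produce such a class $\sfC$ for the surjectivity step, and that the compactness assumption on $\one$ enters only indirectly, via its role in ensuring that the right adjoint $\gam$ of the inclusion $\Loc_\sfT(\one)\to\sfT$ preserves coproducts in Proposition~\ref{pr:FLocS}.
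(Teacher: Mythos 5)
Your argument is correct and is essentially the paper's own proof: the paper simply declares the inverse to be $\sfU\mapsto\sfU\cap\Loc_\sfT(\one)$ and leaves the two compositions implicit, the first being exactly the identity $\Loc^\otimes_\sfT(\sfS)\cap\Loc_\sfT(\one)=\sfS$ from Proposition~\ref{pr:FLocS} and the second being the short generation argument you spell out. Your added remarks on where the hypotheses enter are accurate but not a departure from the paper's route.
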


\begin{proof}
The inverse map sends $\sfU\subseteq\sfT$ to $\sfU\cap\Loc_\sfT(\one)$.
\end{proof}

We are now ready to prove Theorem~\ref{th:T}. Note that in this $\sfT$
is a compactly generated tensor triangulated category, which entails a
host of additional requirements; see
\cite[\S8]{Benson/Iyengar/Krause:2008a} for a list.

\begin{proof}[Proof of Theorem \ref{th:T}]
It follows from Proposition~\ref{pr:FLocS} that the assignment
\[ 
  \sfS\longmapsto \Loc^\otimes_\sfT(\sfS) 
\] 
is an injective map from the localising subcategories of
$\Loc_\sfT(\one)$ to the tensor ideal localising subcategories of
$\sfT$. In general, it is not bijective, as the example of
Remark~\ref{rk:P1} shows. However, since $\sfT$ is stratified by $R$
as a tensor triangulated category, it follows from
\cite[\S7]{Benson/Iyengar/Krause:bik2} that each tensor ideal
localising subcategory is generated by a set of objects of the form
$\gam_\fp\one$.  Since $R$ has finite Krull dimension,
\cite[Theorem~3.4]{Benson/Iyengar/Krause:bik2} yields that
$\gam_\fp\one$ is in $\Loc_\sfT(\one)$.  Therefore, given a tensor
ideal localising subcategory $\sfU$ of $\sfT$, the localising
subcategory
\[ 
\sfU'=\Loc_\sfT(\{\gam_{\fp} \one \mid \fp\in\Supp_{R}\sfU\})\subseteq
\Loc_{\sfT}(\one)
\] 
satisfies $\Loc^\otimes_\sfT(\sfU')=\sfU$.  This proves the
surjectivity of the assignment. Moreover, we have shown that each
localising subcategory of $\Loc_\sfT(\one)$ is generated by objects of
the form $\gam_\fp\one$, so $\Loc_\sfT(\one)$ is stratified by the
action of $R$; see \cite[Theorem~4.2]{Benson/Iyengar/Krause:bik2}.
\end{proof}

\section{The cohomological nucleus}

Let $(\sfT,\otimes,\one)$ be a compactly generated tensor triangulated
category and let $R$ be a graded commutative noetherian ring acting on
$\sfT$ via a homomorphism $R\to \End^{*}_{\sfT}(\one)$. Suppose in
addition that $R$ has finite Krull dimension.

We define the \emph{cohomological nucleus} of $\sfT$ as the set of
homogeneous prime ideals $\fp$ of $R$ such that there exists an object
$X\in\sfT$ satisfying $\Hom_\sfT^*(\one, X)=0$ and $\gam_\fp X\neq
0$. This definition is motivated by work of Benson, Carlson, and
Robinson in the context of modular group representations
\cite{Benson/Carlson/Robinson:1990a}.

For $\fp$ in $\Spec R$ consider the tensor ideal localising subcategory
\[
\gam_\fp\sfT=\{Y\in\sfT\mid Y\cong \gam_\fp X\text{ for some }X\in\sfT\}.
\]
Note that an object $X\in\sfT$ belongs to $\gam_\fp\sfT$ if and only
if $\Hom_\sfT^*(C,X)$ is $\fp$-local and $\fp$-torsion for every
compact $C\in\sfT$, by
\cite[Corollary~4.10]{Benson/Iyengar/Krause:2008a}. The result below
gives a local description of the cohomological nucleus.

\begin{proposition}
Let $\fp$ be a homogeneous prime ideal of $R$. The following conditions are equivalent:
\begin{enumerate}
\item Every object $X$ in $\sfT$ with $\Hom_\sfT^*(\one, X)=0$ satisfies
  $\gam_\fp X= 0$.
\item One has $\Loc_{\sfT}(\gam_{\fp}\one)=\gam_\fp\sfT$.
\item Every localising subcategory of $\gam_\fp\sfT$ is a tensor ideal
  of $\sfT$.
\end{enumerate}
\end{proposition}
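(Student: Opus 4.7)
The plan is to establish the two equivalences (1) $\Leftrightarrow$ (2) and (2) $\Leftrightarrow$ (3). I use three basic facts throughout: since $R$ has finite Krull dimension, $\gam_{\fp}\one$ lies in $\Loc_{\sfT}(\one)$ (as in the proof of Theorem~\ref{th:T}); the functor $\gam_{\fp}$ is exact and preserves set-indexed coproducts because $\one$ is compact; and $\gam_{\fp}X \cong \gam_{\fp}\one \otimes X$ in this tensor-triangulated setting. I also use the identification $\Hom_{\sfT}^{*}(C, Y) \cong \Hom_{\sfT}^{*}(C, \gam_{\fp}Y)$ for $C \in \gam_{\fp}\sfT$, which follows from the triangle $\gam_{\fp}Y \to Y \to L_{\fp}Y$ together with the semi-orthogonality $\Hom_{\sfT}^{*}(\gam_{\fp}\sfT, L_{\fp}\sfT) = 0$.

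For (2) $\Rightarrow$ (1), if $\Hom_{\sfT}^{*}(\one, X) = 0$ then the localising class $\{Z \in \sfT : \Hom_{\sfT}^{*}(Z, X) = 0\}$ contains $\Loc_{\sfT}(\one)$ and hence $\gam_{\fp}\one$. Combined with the identification above, $\Hom_{\sfT}^{*}(\gam_{\fp}\one, \gam_{\fp}X) = 0$, and since (2) makes $\gam_{\fp}\one$ a generator of $\gam_{\fp}\sfT$ as a localising subcategory, the standard generator argument yields $\gam_{\fp}X = 0$. For (3) $\Rightarrow$ (2), apply (3) to $\sfL = \Loc_{\sfT}(\gam_{\fp}\one)$: the conclusion that $\sfL$ is tensor-ideal in $\sfT$ forces $\gam_{\fp}X \cong \gam_{\fp}\one \otimes X \in \sfL$ for every $X \in \sfT$, giving $\gam_{\fp}\sfT \subseteq \Loc_{\sfT}(\gam_{\fp}\one)$. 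Conversely, for (2) $\Rightarrow$ (3), given a localising $\sfL \subseteq \gam_{\fp}\sfT$, $L \in \sfL$, and $X \in \sfT$, the localising class $\{Z \in \gam_{\fp}\sfT : Z \otimes L \in \sfL\}$ contains $\gam_{\fp}\one$ (since $\gam_{\fp}\one \otimes L \cong \gam_{\fp}L = L$), hence by (2) equals $\gam_{\fp}\sfT$, and so $X \otimes L \cong \gam_{\fp}X \otimes L \in \sfL$.

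The delicate implication is (1) $\Rightarrow$ (2), and the idea is to split an arbitrary $X \in \sfT$ via the Bousfield colocalisation at $\Loc_{\sfT}(\one)$, which exists because $\one$ is compact. This produces a triangle $X_{\one} \to X \to X'$ with $X_{\one} \in \Loc_{\sfT}(\one)$ and $\Hom_{\sfT}^{*}(\one, X') = 0$. Applying the exact functor $\gam_{\fp}$ and invoking (1), the term $\gam_{\fp}X'$ vanishes, so $\gam_{\fp}X \cong \gam_{\fp}X_{\one}$. Since $\gam_{\fp}$ preserves coproducts and sends $\one$ to $\gam_{\fp}\one$, Lemma~\ref{le:gen2} applied to $\gam_{\fp}\colon\sfT \to \sfT$ gives $\gam_{\fp}\Loc_{\sfT}(\one) \subseteq \Loc_{\sfT}(\gam_{\fp}\one)$. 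Therefore $\gam_{\fp}X \in \Loc_{\sfT}(\gam_{\fp}\one)$ for every $X \in \sfT$, proving $\gam_{\fp}\sfT \subseteq \Loc_{\sfT}(\gam_{\fp}\one)$ and hence (2). The main obstacle is precisely this passage: condition (1) is a pointwise condition on objects whereas (2) is a structural condition on a localising subcategory, and the Bousfield split at $\Loc_{\sfT}(\one)$ is what bridges the gap by reducing each $X$ to one inside $\Loc_{\sfT}(\one)$, where Lemma~\ref{le:gen2} can act.
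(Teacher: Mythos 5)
Your overall architecture is fine, and three of your four implications are correct: (1)$\Rightarrow$(2) is essentially the paper's own argument (split $X$ by the Bousfield colocalisation onto $\Loc_\sfT(\one)$, kill the second piece using (1), then apply Lemma~\ref{le:gen2}), while (2)$\Rightarrow$(3) and (3)$\Rightarrow$(2) are hands-on versions of what the paper extracts from Proposition~\ref{pr:FLocS}. The genuine gap is the ``identification'' $\Hom^*_\sfT(C,Y)\cong\Hom^*_\sfT(C,\gam_\fp Y)$ for $C\in\gam_\fp\sfT$, on which your proof of (2)$\Rightarrow$(1) rests. This is false, and the justification offered does not exist: $\gam_\fp$ is not a colocalisation functor on $\sfT$; it is the composite $\gam_{\mcV(\fp)}L_{\mcZ(\fp)}$ of a colocalisation with a localisation, so there is no natural triangle $\gam_\fp Y\to Y\to L_\fp Y$, and the orthogonality you would need (maps \emph{from} $\fp$-local $\fp$-torsion objects \emph{into} the complementary pieces vanish) fails for the localisation half. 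Concretely, take $\sfT=\sfD(\bbZ)$, $\fp=(0)$, $C=\gam_\fp\one=\bbQ$ and $Y=\bbZ$: here $\gam_{\mcV(0)}=\Id$, so $\gam_\fp Y=\bbQ$ and $\Hom^0_\sfT(C,\gam_\fp Y)=\bbQ$, whereas $\Hom^0_\sfT(\bbQ,\bbZ)=0$.

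Fortunately the only instance you use is true, and the step is repaired by reordering the two passages. Since $R$ has finite Krull dimension, $\gam_\fp X\in\Loc_\sfT(X)$; because $\one$ is compact, $\{W\in\sfT\mid\Hom^*_\sfT(\one,W)=0\}$ is a localising subcategory containing $X$, hence it contains $\gam_\fp X$, so $\Hom^*_\sfT(\one,\gam_\fp X)=0$. Now $\{Z\in\sfT\mid\Hom^*_\sfT(Z,\gam_\fp X)=0\}$ is localising and contains $\one$, hence contains $\gam_\fp\one$. That is: first replace $X$ by $\gam_\fp X$ with $\one$ as the test object, and only afterwards replace $\one$ by $\gam_\fp\one$; your generator argument then finishes as written. (This is exactly how the paper's implication (3)$\Rightarrow$(1) proceeds.) A minor additional quibble: $\gam_\fp$ preserves coproducts because $\gam_\fp\cong\gam_\fp\one\otimes-$, not because $\one$ is compact; compactness of $\one$ is what guarantees that the colocalisation onto $\Loc_\sfT(\one)$ used in your (1)$\Rightarrow$(2) preserves coproducts.
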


\begin{proof}
  The Krull dimension of $R$ is finite, so $\gam_\fp X$ is in
  $\Loc_\sfT(X)$ for each $X$ in $\sfT$, by
  \cite[Theorem~3.4]{Benson/Iyengar/Krause:bik2}. This fact is used
  without further comment.

  (1) $\Rightarrow$ (2): Set $\sfS=\Loc_{\sfT}(\gam_{\fp}\one)$. Note
  that $\sfS\subseteq \gam_\fp\sfT$; we claim that equality
  holds. Indeed, $\sfS\subseteq\Loc_{\sfT}(\one)$ and also
  $\Loc_{\sfT}^{\otimes}(\sfS)=\gam_{\fp}\sfT$, since
  $\gam_\fp=\gam_\fp\one\otimes -$. Thus, for any $X$ in
  $\gam_\fp\sfT$ from Proposition~\ref{pr:FLocS} one gets an exact
  triangle $\gam X\to X\to X'\to$ with $\gam X\in\sfS$ and
  $\Hom_\sfT^*(\one,X)=0$. Then (1) implies $X'=0$ and hence
  $X\in\sfS$.

  (2) $\Rightarrow$ (3): Let $\sfS$ be a localising subcategory of
  $\gam_\fp\sfT$. Using (2) and the fact that $\gam_\fp\sfT$ is a
  tensor ideal of $\sfT$, one has
  $\Loc_\sfT^\otimes(\sfS)\subseteq\Loc_\sfT(\one)$. Then it follows,
  again from Proposition~\ref{pr:FLocS}, that $\sfS$ is a tensor ideal
  of $\sfT$.

  (3) $\Rightarrow$ (1): Assume $\Hom_\sfT^*(\one,X)=0$; then
  $\Hom_\sfT^*(\one,\gam_{\fp}X)=0$, as $\one$ is compact. Condition
  (3) implies that $\Loc_\sfT(\gam_\fp\one)=\gam_\fp\sfT$. Thus
  $\gam_\fp X$ belongs to $\Loc_\sfT(\gam_\fp\one)$ and therefore also
  to $\Loc_\sfT(\one)$. So one obtains
  $\Hom_\sfT^*(\gam_{\fp}X,\gam_{\fp}X)=0$, which implies
  $\gam_{\fp}X=0$.
 \end{proof}

 Consider as an example for $\sfT$ the stable module category $\StMod
 kG$ of a finite group $G$ with the canonical action of $R=H^*(G,k)$.
 We refer to \cite{Benson:1994a,Benson/Carlson/Robinson:1990a} for the
 discussion of two variations of the nucleus, namely the \emph{group
   theoretic} and the \emph{representation theoretic} nucleus. There
 it is shown that $\Loc_\sfT(\one)=\sfT$ if and only if the
 centraliser of every element of order $p$ in $G$ is $p$-nilpotent and
 every block is either principal or semisimple, where $p$ denotes the
 characteristic of the field $k$.

It is convenient to define for any class $\sfC$ of objects of $\sfT$
\begin{align*}
\sfC^\perp&=\{Y\in\sfT\mid\Hom^*_\sfT(X,Y)=0\text{ for all }X\in\sfC\},\\
^\perp\sfC&=\{X\in\sfT\mid\Hom^*_\sfT(X,Y)=0\text{ for all }Y\in\sfC\}.
\end{align*}

Now let $\sfS=\Loc_\sfT(\one)$.  The \emph{representation theoretic
  nucleus} is by definition
\[
\bigcup_{X\in \sfS^\perp\cap\sfT^c}\supp_R X.
\] 
Clearly, this is contained in the cohomological nucleus.  It is a
remarkable fact that the representation theoretic nucleus is non-empty
if $\sfS^\perp\neq 0$; this is proved in
\cite{Benson:1994a,Benson/Carlson/Robinson:1990a}.  Moreover,
Question~13 of \cite{Carlson:2000a} asks whether
$\sfS={^\perp(\sfS^\perp\cap\sfT^c)}$.  Note that
$\sfS={^\perp(\sfS^\perp)}$ follows from general principles.

\subsection*{Acknowledgments} 
It is a pleasure to thank Greg Stevenson for helpful comments on this work.

\end{document}